\newcommand{\shoenfield}{S}
\newcommand{\goedel}{D}
\newcommand{\negative}{N}
\newcommand{\bfi}{B}
\newcommand{\krivine}{K}
\newcommand{\sbfi}{U}
\newcommand{\ha}{\mathsf{HA}^\omega}
\newcommand{\pa}{\mathsf{PA}^\omega}
\newcommand{\haunlhd}{\mathsf{HA}^\omega_\unlhd}
\newcommand{\paunlhd}{\mathsf{PA}^\omega_\unlhd}
\newcommand{\blem}{\mathsf{B}\text{-}\mathsf{LEM}}
\newcommand{\bmac}{\mathsf{B}\text{-}\mathsf{mAC}}
\newcommand{\tforall}{\tilde{\forall}}
\newcommand{\texists}{\tilde{\exists}}
\newcommand{\defequiv}{\mathrel{\mathop:}\equiv}
\newcommand{\ul}[1]{\:\! \underline{#1} \:\!}
\newcommand{\citenospace}[1]{[\citen{#1}]}
\newtheorem{theorem}{Theorem}
\newtheorem{lemma}[theorem]{Lemma}
\theoremstyle{definition}
\newtheorem{definition}[theorem]{Definition}
\theoremstyle{remark}
\newtheorem{remark}[theorem]{Remark}
\newtheorem{notation}[theorem]{Notation}
\begin{document}

\title{Factorization of the Shoenfield-like bounded functional interpretation\footnote{Reprinted, in part, from the final ``Factorization of the Shoenfield-like bounded functional interpretation,'' in Notre Dame Journal of Formal Logic, volume 50, number 1, 2009, pages 53--60. Copyright 2008, University of Notre Dame. Used by permission of the publisher, Duke University Press.\newline
2001 Mathematics Subject Classification: 03F03, 03F10. Keywords: functional interpretation, negative translation, majorizability.}}
\author{Jaime Gaspar\footnote{Arbeitsgruppe Logik, Fachbereich Mathematik, Technische Universit\"{a}t Darmstadt, Schlossgartenstrasse 7, 64289 Darmstadt, Germany. \texttt{mail@jaimegaspar.com}, \texttt{www.jaimegaspar.com}.\newline
I am grateful for the suggestions of Ulrich Kohlenbach, Fernando Ferreira, and an anonymous referee. This work was financially supported by the Portuguese Funda\c{c}\~{a}o para a Ci\^{e}ncia e a Tecnologia, grant SFRH/BD/36358/2007.}}
\date{8 September 2010}
\maketitle

\begin{abstract}
  We adapt Streicher and Kohlenbach's proof of the factorization $S = KD$ of the Shoenfield translation $S$ in terms of Krivine's negative translation $K$ and the G\"{o}del functional interpretation $D$, obtaining a proof of the factorization $U = KB$ of Ferreira's Shoenfield-like bounded functional interpretation $U$ in terms of $K$ and Ferreira and Oliva's bounded functional interpretation $B$.
\end{abstract}

\section{Introduction}

In 1958, G\"{o}del \cite{godelDialectica}\nocite{godelCollectedWorksII} presented a functional interpretation $\goedel$ of Heyting arithmetic $\ha$ into itself (actually, into a quantifier-free theory, for foundational reasons). When composed with a negative translation $\negative$ of Peano arithmetic $\pa$ into $\ha$ (G\"{o}del \cite{godelNegativeTranslation}\nocite{godelCollectedWorksI}), it results in a two-step functional interpretation $\negative \goedel$ of $\pa$ into $\ha$ \cite{godelDialectica}. Nine years later, Shoenfield \cite{shoenfield} presented a one-step functional interpretation $\shoenfield$ of $\pa$ into $\ha$.

In 2007, Streicher and Kohlenbach \cite{streicherKohlenbach}, and independently Avigad \cite{avigad}, proved the factorization $\shoenfield = \krivine \goedel$ of $\shoenfield$ in terms of $\goedel$ and a negative translation $\krivine$ due to Streicher and Reus \cite{streicherReus}, inspired by Krivine \cite{krivine}.
\begin{equation*}
  \xymatrix@M=5pt{\pa \ar[r]^\krivine \ar@/_1pc/[rr]_\shoenfield & \ha \ar[r]^\goedel & \ha}
\end{equation*}

In 2005, Ferreira and Oliva \cite{ferreiraOliva} presented a functional interpretation $\bfi$ of Heyting arithmetic with majorizability $\haunlhd$ into itself. Like $\goedel$, when composed with a negative translation $\negative$ of Peano arithmetic with majorizability $\paunlhd$ into $\haunlhd$, it results in a two-step functional interpretation $\negative \bfi$ of $\paunlhd$ into $\haunlhd$ \cite{ferreiraOliva}. Two years later, Ferreira \cite{ferreira} presented a one-step functional interpretation $\sbfi$ of $\paunlhd$ into $\haunlhd$.

By adapting Streicher and Kohlenbach's proof, we obtain the factorization $\sbfi = \krivine \bfi$.
\begin{equation*}
  \xymatrix@M=5pt{\paunlhd \ar[r]^\krivine \ar@/_1pc/[rr]_\sbfi & \haunlhd \ar[r]^\bfi & \haunlhd}
\end{equation*}

\section{Framework}

\begin{definition}[\citenospace{ferreiraOliva,troelstra}]
  The \emph{Heyting arithmetic} $\ha$ that we consider is the usual Heyting arithmetic in all finite types, but with a minimal treatment of equality and no extensionality, following Anne Troelstra \cite{troelstra}.

  The \emph{Heyting arithmetic with majorizability} $\haunlhd$ is obtained from $\ha$ by
  \begin{enumerate}
    \item adding new atomic formulas $t \unlhd _\rho q$ for all finite types $\rho$ (where $t$ and $q$ are terms of type $\rho$);
    \item adding syntactically new \emph{bounded quantifications} $\forall x \unlhd_\rho t A$ and $\exists x \unlhd_\rho t A$ (where $A$ is a formula and the variable $x$ does not occur in the term $t$);
    \item adding the axioms
      \begin{equation*}
        \forall x \unlhd t A \leftrightarrow \forall x (x \unlhd t \to A), \qquad
        \exists x \unlhd t A \leftrightarrow \exists x (x \unlhd t \wedge A)
      \end{equation*}
      governing the bounded quantifications;
    \item adding the axioms and rule
      \begin{gather*}
        x \unlhd_0 y \leftrightarrow x \leq_0 y, \qquad
        x \unlhd y \to \forall u \unlhd v (xu \unlhd yv \wedge yu \unlhd yv), \\
        \vcenter{\infer{A_b \to t \unlhd q}{A_b \wedge u \unlhd v \to tu \unlhd qv \wedge qu \unlhd qv}}
      \end{gather*}
      governing the \emph{majorizability} symbol $\unlhd$ (where $\leq_0$ is the usual inequality between terms of type $0$, $A_b$ is a \emph{bounded formula}, that is, a formula with all quantifications bounded, and in the rule the variables $u$ and $v$ do not occur free in the formula $A_b$ neither in the terms $t$ and $q$);
    \item extending the induction axiom to the new formulas.
  \end{enumerate}
  This system is presented in detail in \cite{ferreiraOliva}.
\end{definition}

We will need the following notation.

\begin{notation}[\citenospace{ferreiraOliva}]
  An underlined letter $\ul{t}$ means a tuple (possibly empty) of terms $t_1,\ldots,t_n$. We use the abbreviations
  \begin{align*}
    \ul{t} \unlhd \ul{t} &\defequiv t_1 \unlhd t_1 \wedge \cdots \wedge t_n \unlhd t_n, \\
    \forall \ul{x} A &\defequiv \forall x_1 \cdots \forall x_n A, & \exists \ul{x} A &\defequiv \exists x_1 \cdots \exists x_n A, \\
    \forall \ul{x} \unlhd \ul{t} A &\defequiv \forall x_1 \unlhd t_1 \cdots \forall x_n \unlhd t_n A, & \quad \exists \ul{x} \unlhd \ul{t} A &\defequiv \exists x_1 \unlhd t_1 \cdots \exists x_n \unlhd t_n A, \\
    \tforall \ul{x} A &\defequiv \forall \ul{x} (\ul{x} \unlhd \ul{x} \to A), & \texists \ul{x} A &\defequiv \exists \ul{x} (\ul{x} \unlhd \ul{x} \wedge A), \\
    \tforall \ul{x} \unlhd \ul{t} A &\defequiv \forall \ul{x} \unlhd \ul{t} (\ul{x} \unlhd \ul{x} \to A), & \texists \ul{x} \unlhd \ul{t} A &\defequiv \exists \ul{x} \unlhd \ul{t} (\ul{x} \unlhd \ul{x} \wedge A).
  \end{align*}
\end{notation}

We consider two logical principles.

\begin{definition}
  The \emph{law of excluded middle for bounded formulas} $\blem$ is the principle
  \begin{equation*}
    A_b \vee \neg A_b,
  \end{equation*}
  where $A_b$ is a bounded formula.
\end{definition}

\begin{definition}[\citenospace{ferreira}]
  The \emph{monotone bounded choice} $\bmac$ is the principle
  \begin{equation*}
    \tforall \ul{x} \texists \ul{y} A_b(\ul{x},\ul{y}) \to \texists \ul{Y} \tforall \ul{x} \texists \ul{y} \unlhd \ul{Y}\ul{x} A_b(\ul{x},\ul{y}),
  \end{equation*}
  where $A_b$ is a bounded formula.
\end{definition}

\section{Negative translation and bounded functional interpretations}

For the convenience of the reader, we recall the definitions of $\krivine$, $\bfi$ and $\sbfi$.

\begin{definition}[\citenospace{avigad,krivine,streicherKohlenbach,streicherReus}]
  \emph{Krivine's negative translation} (extended to arithmetic with majorizability)\footnote{It still holds a soundness theorem $\paunlhd \vdash A \Rightarrow \haunlhd \vdash A^\krivine$ and a characterization theorem $\paunlhd \vdash A \leftrightarrow A^\krivine$.} $A^\krivine$ of a formula $A$ of $\paunlhd$ based on $\neg,\vee,\forall\unlhd,\forall$ is $A^\krivine \defequiv \neg A_\krivine$, where $A_\krivine$ is defined by induction on the complexity of formulas.
  \begin{enumerate}
    \item If $A$ is an atomic formula, then $A_\krivine \defequiv \neg A$.
    \item $(\neg A)_\krivine \defequiv \neg A_\krivine$.
    \item $(A \vee B)_\krivine \defequiv A_\krivine \wedge B_\krivine$.
    \item $(\forall x \unlhd t A)_\krivine \defequiv \exists x \unlhd t A_\krivine$.
    \item $(\forall x A)_\krivine \defequiv \exists x A_\krivine$.
  \end{enumerate}
  If we consider $\wedge$ a primitive symbol, then:
  \begin{enumerate}
    \setcounter{enumi}{5}
    \item $(A \wedge B)_\krivine \defequiv A_\krivine \vee B_\krivine$.
  \end{enumerate}
\end{definition}

\begin{definition}[\citenospace{ferreiraOliva}] The \emph{bounded functional interpretation} $A^\bfi$ of a formula $A$ of $\haunlhd$ based on $\bot,\wedge,\vee,\to,\forall\unlhd,\exists\unlhd,\forall,\exists$ is defined by induction on the complexity of formulas.
  \begin{enumerate}
    \item If $A$ is an atomic formula, then $A^\bfi \defequiv \texists \ul{x} \tforall \ul{y} A_\bfi(\ul{x},\ul{y}) \defequiv A$, where $\ul{x}$ and $\ul{y}$ are empty tuples.
  \end{enumerate}
  If $A^\bfi \equiv \texists \ul{x} \tforall \ul{y} A_\bfi(\ul{x},\ul{y})$ and $B^\bfi \equiv \texists \ul{x'} \tforall \ul{y'} B_\bfi(\ul{x'},\ul{y'})$, then:
  \begin{enumerate}
    \setcounter{enumi}{1}
    \item $(A \wedge B)^\bfi \defequiv \texists \ul{x},\ul{x'} \tforall \ul{y},\ul{y'} (A \wedge B)_\bfi(\ul{x},\ul{x'},\ul{y},\ul{y'}) \defequiv$\\$\texists \ul{x},\ul{x'} \tforall \ul{y},\ul{y'} [A_\bfi(\ul{x},\ul{y}) \wedge B_\bfi(\ul{x'},\ul{y'})]$;
    \item $(A \vee B)^\bfi \defequiv \texists \ul{x},\ul{x'} \tforall \ul{y},\ul{y'} (A \vee B)_\bfi(\ul{x},\ul{x'},\ul{y},\ul{y'}) \defequiv$\\$\texists \ul{x},\ul{x'} \tforall \ul{y},\ul{y'} [\tforall \ul{\tilde{y}} \unlhd \ul{y} A_\bfi(\ul{x},\ul{\tilde{y}}) \vee \tforall \ul{\tilde{y}'} \unlhd \ul{y'} B_\bfi(\ul{x'},\ul{\tilde{y}'})]$;
    \item $(A \to B)^\bfi \defequiv \texists \ul{X'},\ul{Y} \tforall \ul{x},\ul{y'} (A \to B)_\bfi(\ul{X'},\ul{Y},\ul{x},\ul{y'}) \defequiv$\\$\texists \ul{X'},\ul{Y} \tforall \ul{x},\ul{y'} [\tforall \ul{y} \unlhd \ul{Y} \ul{x} \ul{y'} A_\bfi(\ul{x},\ul{y}) \to B_\bfi(\ul{X'}\ul{x},\ul{y'})]$;
    \item $(\forall z \unlhd t A)^\bfi \defequiv \texists \ul{x} \tforall \ul{y} (\forall z \unlhd t A)_\bfi(\ul{x},\ul{y}) \defequiv \texists \ul{x} \tforall \ul{y} \forall z \unlhd t A_\bfi(\ul{x},\ul{y})$;
    \item $(\exists z \unlhd t A)^\bfi \defequiv \texists \ul{x} \tforall \ul{y} (\exists z \unlhd t A)_\bfi(\ul{x},\ul{y}) \defequiv \texists \ul{x} \tforall \ul{y} \exists z \unlhd t \tforall \ul{\tilde{y}} \unlhd \ul{y} A_\bfi(\ul{x},\ul{\tilde{y}})$;
    \item $(\forall z A)^\bfi \defequiv \texists \ul{X} \tforall w,\ul{y} (\forall z A)_\bfi(\ul{X},w,\ul{y}) \defequiv \texists \ul{X} \tforall w,\ul{y} \forall z \unlhd w A_\bfi(\ul{X}w,\ul{y})$;
    \item $(\exists z A)^\bfi \defequiv \texists w,\ul{x} \tforall \ul{y} (\exists z A)_\bfi(w,\ul{x},\ul{y}) \defequiv \texists w,\ul{x} \tforall \ul{y} \exists z \unlhd w \tforall \ul{\tilde{y}} \unlhd \ul{y} A_\bfi(\ul{x},\ul{\tilde{y}})$.
  \end{enumerate}
\end{definition}

\begin{remark}[\citenospace{ferreiraOliva}]
  From 1 and 4 we conclude that if $A^\bfi \equiv \texists \ul{x} \tforall \ul{y} A_\bfi(\ul{x},\ul{y})$, then $(\neg A)^\bfi \equiv \texists \ul{Y} \tforall \ul{x} (\neg A)_\bfi(\ul{Y},\ul{x}) \equiv \texists \ul{Y} \tforall \ul{x} \neg \tforall \ul{y} \unlhd \ul{Y}\ul{x} A_\bfi(\ul{x},\ul{y})$.
\end{remark}

\begin{remark}[\citenospace{ferreiraOliva}]
  We can prove by induction on the complexity of formulas that $A_\bfi(\ul{x},\ul{y})$ is a bounded formula.
\end{remark}

\begin{definition}[\citenospace{ferreira}]
  The \emph{Shoenfield-like bounded functional interpretation} $A^\sbfi$ of a formula $A$ of $\paunlhd$ based on $\neg,\vee,\forall\unlhd,\forall$ is defined by induction on the complexity of formulas.
  \begin{enumerate}
    \item If $A$ is an atomic formula, then $A^\sbfi \defequiv \tforall \ul{x} \texists \ul{y} A_\sbfi(\ul{x},\ul{y}) \defequiv A$, where $\ul{x}$ and $\ul{y}$ are empty tuples.
  \end{enumerate}
  If $A^\sbfi \equiv \tforall \ul{x} \texists \ul{y} A_\sbfi(\ul{x},\ul{y})$ e $B^\sbfi \equiv \tforall \ul{x'} \texists \ul{y'} B_\sbfi(\ul{x'},\ul{y'})$, then:
  \begin{enumerate}
    \setcounter{enumi}{1}
    \item $(\neg A)^\sbfi \defequiv \tforall \ul{Y} \texists \ul{x} (\neg A)_\sbfi(\ul{Y},\ul{x}) \defequiv \tforall \ul{Y} \texists \ul{x} \texists \ul{\tilde{x}} \unlhd \ul{x} \neg A_\sbfi(\ul{\tilde{x}},\ul{Y}\ul{\tilde{x}})$;
    \item $(A \vee B)^\sbfi \defequiv \tforall \ul{x},\ul{x'} \texists \ul{y},\ul{y'} (A \vee B)_\sbfi(\ul{x},\ul{x'},\ul{y},\ul{y'}) \defequiv$\\$\tforall \ul{x},\ul{x'} \texists \ul{y},\ul{y'} [A_\sbfi(\ul{x},\ul{y}) \vee B_\sbfi(\ul{x'},\ul{y'})]$;
    \item $(\forall z \unlhd t A)^\sbfi \defequiv \tforall \ul{x} \texists \ul{y} (\forall z \unlhd t A)_\sbfi(\ul{x},\ul{y}) \defequiv \tforall \ul{x} \texists \ul{y} \forall z \unlhd t A_\sbfi(\ul{x},\ul{y})$;
    \item $(\forall z A)^\sbfi \defequiv \tforall w,\ul{x} \texists \ul{y} (\forall z A)_\sbfi(w,\ul{x},\ul{y}) \defequiv \tforall w,\ul{x} \texists \ul{y} \forall z \unlhd w A_\sbfi(\ul{x},\ul{y})$.
  \end{enumerate}
  If we consider $\wedge$ a primitive symbol, then:
  \begin{enumerate}
    \setcounter{enumi}{5}
    \item $(A \wedge B)^\sbfi \defequiv \tforall \ul{x},\ul{x'} \texists \ul{y},\ul{y'} (A \wedge B)_\sbfi(\ul{x},\ul{x'},\ul{y},\ul{y'}) \defequiv$\\$\tforall \ul{x},\ul{x'} \texists \ul{y},\ul{y'} [A_\sbfi(\ul{x},\ul{y}) \wedge B_\sbfi(\ul{x'},\ul{y'})]$.
  \end{enumerate}
\end{definition}

\begin{remark}[\citenospace{ferreira}]
  We can also prove by induction on the complexity of formulas that $A_\sbfi(\ul{x},\ul{y})$ is a bounded formula.
\end{remark}

$\sbfi$ is monotone on the second tuple of the variables, in the following sense.

\begin{lemma}[monotonicity of $\sbfi$\cite{ferreira}]
  $\haunlhd \vdash \forall \ul{x} \forall \ul{y} \forall \ul{\tilde{y}} \unlhd \ul{y} [A_\sbfi(\ul{x},\ul{\tilde{y}}) \to A_\sbfi(\ul{x},\ul{y})]$.
\end{lemma}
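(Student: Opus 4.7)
The plan is to proceed by induction on the complexity of the formula $A$, following the clauses defining $\sbfi$. The guiding observation is that in every clause of the definition, the second tuple $\ul{y}$ appears in $A_\sbfi(\ul{x},\ul{y})$ either as the second argument of an inductively smaller $B_\sbfi$, or (in the negation clause) as the bound of an outer bounded existential. Consequently, monotonicity in $\ul{y}$ will reduce either directly to the induction hypothesis or to an elementary property of $\unlhd$.

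For the atomic base case both tuples are empty and the implication is trivial. For $A \vee B$ and $A \wedge B$ the second tuple splits as $\ul{y},\ul{y'}$; since the matrix is just $A_\sbfi(\ul{x},\ul{y}) \vee B_\sbfi(\ul{x'},\ul{y'})$ (respectively $\wedge$), the induction hypotheses for $A$ and $B$ apply to each disjunct separately. For $\forall z \unlhd t A$ the matrix is $\forall z \unlhd t A_\sbfi(\ul{x},\ul{y})$, and the induction hypothesis applied uniformly in $z$ yields the conclusion. The unbounded case $\forall z A$ is structurally identical, since its second tuple is still $\ul{y}$ and the matrix is $\forall z \unlhd w A_\sbfi(\ul{x},\ul{y})$.

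The only clause where the induction hypothesis is not invoked is negation. Here $(\neg A)_\sbfi(\ul{Y},\ul{x}) \equiv \texists \ul{\tilde{x}} \unlhd \ul{x} \neg A_\sbfi(\ul{\tilde{x}},\ul{Y}\ul{\tilde{x}})$, so monotonicity in $\ul{x}$ amounts to weakening the outer bound: given $\ul{x}_0 \unlhd \ul{x}$ together with a witness $\ul{\tilde{x}} \unlhd \ul{x}_0$ satisfying $\ul{\tilde{x}} \unlhd \ul{\tilde{x}}$ and $\neg A_\sbfi(\ul{\tilde{x}},\ul{Y}\ul{\tilde{x}})$, the same $\ul{\tilde{x}}$ witnesses $(\neg A)_\sbfi(\ul{Y},\ul{x})$ once one upgrades $\ul{\tilde{x}} \unlhd \ul{x}_0$ to $\ul{\tilde{x}} \unlhd \ul{x}$ by transitivity of $\unlhd$ at each finite type, a standard derived fact in $\haunlhd$.

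I expect the main obstacle to be essentially bookkeeping, namely keeping the tuples and the indexing straight in the disjunction, conjunction, and negation clauses. The only substantive ingredient beyond the induction hypothesis is componentwise transitivity of the majorizability relation $\unlhd$, invoked in the negation case.
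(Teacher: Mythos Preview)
The paper does not give its own proof of this lemma: it is stated with a citation to Ferreira \cite{ferreira} and used as a black box in Step~3 of the factorization theorem. So there is no in-paper argument to compare against.

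That said, your induction on the complexity of $A$ is the natural and correct proof. The propositional and quantifier clauses go through exactly as you describe, since in each of them the second tuple $\ul{y}$ is passed unchanged into the inductively smaller $A_\sbfi$ (or $B_\sbfi$), and the induction hypothesis applies pointwise. In the negation clause your reduction to enlarging the bound of the outer $\texists \ul{\tilde{x}} \unlhd \ul{x}$ is right; the only nontrivial ingredient is transitivity of $\unlhd$ at each type, which is indeed derivable in $\haunlhd$ from the majorizability axioms and rule (this is proved in Ferreira--Oliva \cite{ferreiraOliva}). Your proof is complete once that fact is on record.
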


\section{Factorization}

We want to prove $A^\sbfi \leftrightarrow (A^\krivine)^\bfi$ by induction on the complexity of formulas. Because it isn't $A^\krivine$ but $A_\krivine$ that is defined by induction on the complexity of formulas, it would be better to write $A^\sbfi \leftrightarrow (\neg A_\krivine)^\bfi$. If $A^\sbfi \equiv \tforall \ul{x} \texists \ul{y} A_\sbfi(\ul{x},\ul{y})$ and $(A_\krivine)^\bfi \equiv \texists \ul{x'} \tforall \ul{y'} (A_\krivine)_\bfi(\ul{x'},\ul{y'})$, then using $\bmac$ in the first equivalence and the monotonicity of $\sbfi$ in the second equivalence, we have
\begin{align}
  A^\sbfi &\equiv \tforall \ul{x} \texists \ul{y} A_\sbfi(\ul{x},\ul{y}) \nonumber \\
  &\leftrightarrow \texists \ul{Y} \tforall \ul{x} \texists \ul{y} \unlhd \ul{Y} \ul{x} A_\sbfi(\ul{x},\ul{y}) \nonumber \\
  &\leftrightarrow \texists \ul{Y} \tforall \ul{x} A_\sbfi(\ul{x},\ul{Y} \ul{x}), \label{equationMotivation1} \\
  (\neg A_\krivine)^\bfi &\equiv \texists \ul{Y'} \tforall \ul{x'} \neg \tforall \ul{y'} \unlhd \ul{Y'} \ul{x'} (A_\krivine)_\bfi(\ul{x'},\ul{y'}). \label{equationMotivation2}
\end{align}
The comparison of formulas (\ref{equationMotivation1}) and (\ref{equationMotivation2}) suggests that we first prove $A_\sbfi(\ul{x},\ul{Y}\ul{x}) \leftrightarrow \neg \tforall \ul{y} \unlhd \ul{Y} \ul{x} (A_\krivine)_\bfi(\ul{x},\ul{y})$, or even better, $A_\sbfi(\ul{x},\ul{y}) \leftrightarrow \neg \tforall \ul{\tilde{y}} \unlhd \ul{y} (A_\krivine)_\bfi(\ul{x},\ul{\tilde{y}})$. Then, by the above argument, we would have $A^\sbfi \leftrightarrow (A^\krivine)^\bfi$.

The factorization proof is almost the straightforward adaptation of Streicher and Kohlenbach's proof but with two tweaks.
\begin{enumerate}
  \item Instead of proving $A_\sbfi(\ul{x},\ul{y}) \leftrightarrow \neg (A_\krivine)_\bfi(\ul{x},\ul{y})$, along the lines of Streicher and Kohlenbach's proof, we prove $A_\sbfi(\ul{x},\ul{y}) \leftrightarrow \neg \tforall \ul{\tilde{y}} \unlhd \ul{y} (A_\krivine)_\bfi(\ul{x},\ul{\tilde{y}})$, where the appearance of the quantification $\tforall \ul{\tilde{y}} \unlhd \ul{y}$ is explained by the above argument.
  \item In proving $A_\sbfi(\ul{x},\ul{y}) \leftrightarrow \neg \tforall \ul{\tilde{y}} \unlhd \ul{y} (A_\krivine)_\bfi(\ul{x},\ul{\tilde{y}})$ we need the hypothesis $\ul{x} \unlhd \ul{x} \wedge \ul{y} \unlhd \ul{y}$ for technical reasons explained in footnotes.
\end{enumerate}
\begin{theorem}[factorization $\sbfi = \krivine \bfi$]
  We have
  \begin{align}
    \haunlhd + \blem &\vdash \tforall \ul{Y},\ul{x} [A_\sbfi(\ul{x},\ul{Y}\ul{x}) \leftrightarrow (A^\krivine)_\bfi(\ul{Y},\ul{x})], \label{equationFactorization1} \\
    \haunlhd + \blem + \bmac &\vdash A^\sbfi \leftrightarrow (A^\krivine)^\bfi. \label{equationFactorization2}
  \end{align}
\end{theorem}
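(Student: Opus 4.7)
The plan is to establish both parts by first proving, by induction on the complexity of formulas $A$ of $\paunlhd$ in the fragment $\{\neg,\vee,\forall\unlhd,\forall\}$, the key lemma
\begin{equation*}
  \haunlhd + \blem \vdash \tforall \ul{x}\tforall \ul{y}\,[A_\sbfi(\ul{x},\ul{y}) \leftrightarrow \neg\tforall \ul{\tilde{y}}\unlhd \ul{y}\,(A_\krivine)_\bfi(\ul{x},\ul{\tilde{y}})].
\end{equation*}
Statement (\ref{equationFactorization1}) then follows by specialising $\ul{y}$ to $\ul{Y}\ul{x}$ and invoking the remark that $(A^\krivine)_\bfi(\ul{Y},\ul{x}) \equiv \neg\tforall \ul{y}\unlhd \ul{Y}\ul{x}\,(A_\krivine)_\bfi(\ul{x},\ul{y})$. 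Statement (\ref{equationFactorization2}) is obtained by running the chain displayed just before the theorem: start from $A^\sbfi \equiv \tforall \ul{x}\texists \ul{y}\, A_\sbfi(\ul{x},\ul{y})$, Skolemise via $\bmac$ (applicable because $A_\sbfi$ is bounded), contract $\texists \ul{y}\unlhd \ul{Y}\ul{x}$ to the witness $\ul{Y}\ul{x}$ by monotonicity of $\sbfi$, apply (\ref{equationFactorization1}), and recognise the resulting $\texists \ul{Y}\tforall \ul{x}\,(A^\krivine)_\bfi(\ul{Y},\ul{x})$ as $(A^\krivine)^\bfi$.

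\textbf{The induction.} The atomic case collapses to $A \leftrightarrow \neg\neg A$ for bounded $A$, i.e.\ an instance of $\blem$. The bounded universal case is immediate, since both interpretations only prepend a $\forall z\unlhd t$ that commutes with $\tforall\ul{\tilde{y}}\unlhd\ul{y}$. The unbounded case $A\equiv\forall z\, B$ introduces on both sides a fresh majorant $w$ of $z$ and reduces, after shuffling $\tforall \ul{\tilde{y}}\unlhd \ul{y}$ past $\forall z\unlhd w$, to the inductive hypothesis applied pointwise below $w$. Disjunction $A\equiv B\vee C$ combines the two inductive hypotheses, moving $\vee$ across the outer negation by $\blem$ applied to the bounded formulas $B_\bfi,C_\bfi$.

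\textbf{The main obstacle.} The hard case is negation $A\equiv\neg B$. Writing $P(\ul{\tilde{x}}) \defequiv \tforall \ul{y}\unlhd \ul{Y}\ul{\tilde{x}}\,(B_\krivine)_\bfi(\ul{\tilde{x}},\ul{y})$, the left-hand side unfolds to $\texists \ul{\tilde{x}}\unlhd \ul{x}\,\neg B_\sbfi(\ul{\tilde{x}},\ul{Y}\ul{\tilde{x}})$ while the right-hand side unfolds to $\neg\tforall \ul{\tilde{x}}\unlhd \ul{x}\,\neg P(\ul{\tilde{x}})$. The inductive hypothesis at $(\ul{\tilde{x}},\ul{Y}\ul{\tilde{x}})$ yields $B_\sbfi(\ul{\tilde{x}},\ul{Y}\ul{\tilde{x}}) \leftrightarrow \neg P(\ul{\tilde{x}})$; negating and applying $\blem$ to the bounded $P$ gives $\neg B_\sbfi(\ul{\tilde{x}},\ul{Y}\ul{\tilde{x}}) \leftrightarrow P(\ul{\tilde{x}})$, after which the remaining equivalence $\texists \ul{\tilde{x}}\unlhd \ul{x}\,P(\ul{\tilde{x}}) \leftrightarrow \neg\tforall \ul{\tilde{x}}\unlhd \ul{x}\,\neg P(\ul{\tilde{x}})$ is again $\blem$ (the quantifiers range over the bounded domain $\ul{\tilde{x}}\unlhd\ul{x}$ and $P$ is bounded). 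The technicality is that invoking the inductive hypothesis at $(\ul{\tilde{x}},\ul{Y}\ul{\tilde{x}})$ requires both $\ul{\tilde{x}}\unlhd\ul{\tilde{x}}$ and $\ul{Y}\ul{\tilde{x}}\unlhd \ul{Y}\ul{\tilde{x}}$; the first is built into the side-condition of $\texists$, while the second needs the standing hypothesis $\ul{Y}\unlhd \ul{Y}$ carried by the outer $\tforall \ul{Y},\ul{x}$ of (\ref{equationFactorization1}), passed through the majorizability axioms. This is precisely the role of tweak~(2) described before the theorem.
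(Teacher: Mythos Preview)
Your overall strategy --- the key lemma, its specialisation to obtain (\ref{equationFactorization1}), and the chain via $\bmac$ and monotonicity of $\sbfi$ to obtain (\ref{equationFactorization2}) --- is exactly the paper's, and your analysis of the negation case, including the role of the self-majorization hypothesis, is correct.

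The gap is in the universal-quantifier cases, which you dismiss as immediate. On the $\krivine\bfi$ side they are \emph{not} a matter of ``prepending $\forall z\unlhd t$'': Krivine sends $\forall z\unlhd t$ to $\exists z\unlhd t$ (and $\forall z$ to $\exists z$), and the $\bfi$-clause for the bounded existential then inserts an extra nested bounded quantifier, giving
\[
  [(\forall z\unlhd t\, A)_\krivine]_\bfi(\ul{x},\ul{\hat{y}}) \;\equiv\; \exists z\unlhd t\;\tforall \ul{\tilde{y}}\unlhd\ul{\hat{y}}\,(A_\krivine)_\bfi(\ul{x},\ul{\tilde{y}}).
\]
After applying the induction hypothesis and $\forall z\unlhd t\,\neg C \leftrightarrow \neg\,\exists z\unlhd t\,C$, what remains is
\[
  \exists z\unlhd t\;\tforall \ul{\tilde{y}}\unlhd\ul{y}\,(A_\krivine)_\bfi(\ul{x},\ul{\tilde{y}}) \;\leftrightarrow\; \tforall \ul{\hat{y}}\unlhd\ul{y}\;\exists z\unlhd t\;\tforall \ul{\tilde{y}}\unlhd\ul{\hat{y}}\,(A_\krivine)_\bfi(\ul{x},\ul{\tilde{y}}),
\]
which is not a commutation of two universal prefixes. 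Its right-to-left direction goes by instantiating $\ul{\hat{y}}:=\ul{y}$, and for that you need the standing hypothesis $\ul{y}\unlhd\ul{y}$ --- the very ``tweak~(2)'' you invoked only for negation. (The same hypothesis is also needed in the disjunction case, to merge $\tforall\ul{\tilde{y}}\unlhd\ul{y}$ and $\tforall\ul{\tilde{y}'}\unlhd\ul{y'}$ into a single prefix.) The unbounded case is entirely analogous with $w$ in place of $t$. The paper spells out precisely these steps; your sketch would fail as written because it misidentifies the shape of $[(\forall z\unlhd t\,A)_\krivine]_\bfi$.
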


\begin{proof}
  Step 1.~First we prove
  \begin{equation}
    \haunlhd + \blem \vdash \tforall \ul{x},\ul{y} [A_\sbfi(\ul{x},\ul{y}) \leftrightarrow \neg \tforall \ul{\tilde{y}} \unlhd \ul{y} (A_\krivine)_\bfi(\ul{x},\ul{\tilde{y}})] \label{equationFactorization3}
  \end{equation}
  by induction on the complexity of formulas.

  \noindent Let us consider the case of atomic formulas $A$. Using $\blem$ in the equivalence, we have
  \begin{align*}
    A_\sbfi &\equiv A \\
    &\leftrightarrow \neg\neg A \\
    &\equiv \neg (A_\krivine)_\bfi.
  \end{align*}

  \noindent Let us now consider the case of negation $\neg A$. Assume $\ul{Y} \unlhd \ul{Y}$ and $\ul{x} \unlhd \ul{x}$. Using the induction hypothesis in the first equivalence and $\blem$ in the second equivalence, we have
  \begin{align*}
    (\neg A)_\sbfi(\ul{Y},\ul{x}) &\equiv \texists \ul{\tilde{x}} \unlhd \ul{x} \neg A_\sbfi(\ul{\tilde{x}},\ul{Y}\ul{\tilde{x}}) \\
    &\leftrightarrow \texists \ul{\tilde{x}} \unlhd \ul{x} \neg\neg \tforall \ul{y} \unlhd \ul{Y} \ul{\tilde{x}} (A_\krivine)_\bfi(\ul{\tilde{x}},\ul{y}) \\
    &\leftrightarrow \neg \tforall \ul{\tilde{x}} \unlhd \ul{x} \neg \tforall \ul{y} \unlhd \ul{Y} \ul{\tilde{x}} (A_\krivine)_\bfi(\ul{\tilde{x}},\ul{y}) \\
    &\equiv \neg \tforall \ul{\tilde{x}} \unlhd \ul{x} [(\neg A)_\krivine]_\bfi(\ul{Y},\ul{\tilde{x}}).
  \end{align*}

  \noindent Let us now consider the case of disjunction $A \vee B$. Assume $\ul{x} \unlhd \ul{x}$, $\ul{x'} \unlhd \ul{x'}$, $\ul{y} \unlhd \ul{y}$, and $\ul{y'} \unlhd \ul{y'}$. Using the induction hypothesis in the first equivalence, $\blem$ in the second equivalence, and intuitionistic logic in the third equivalence,\footnote{The rule for conversion to prenex normal form $\forall u \unlhd v (C \wedge D) \to \forall u \unlhd v C \wedge D$ (where the variable $u$ does not occur free in the formula $D$), despite its innocuous look, does not hold without the hypothesis $v \unlhd v$. So we need to use the hypothesis $\ul{x} \unlhd \ul{x} \wedge \ul{y} \unlhd \ul{y}$ in the proof.} we have
  \begin{align*}
    (A \vee B)_\sbfi(\ul{x},\ul{x'},\ul{y},\ul{y'}) &\equiv A_\sbfi(\ul{x},\ul{y}) \vee B_\sbfi(\ul{x'},\ul{y'}) \\
    &\leftrightarrow \neg \tforall \ul{\tilde{y}} \unlhd \ul{y} (A_\krivine)_\bfi(\ul{x},\ul{\tilde{y}}) \vee \neg \tforall \ul{\tilde{y}'} \unlhd \ul{y'} (B_\krivine)_\bfi(\ul{x'},\ul{\tilde{y}'}) \\
    &\leftrightarrow \neg[\tforall \ul{\tilde{y}} \unlhd \ul{y} (A_\krivine)_\bfi(\ul{x},\ul{\tilde{y}}) \wedge \tforall \ul{\tilde{y}'} \unlhd \ul{y'} (B_\krivine)_\bfi(\ul{x'},\ul{\tilde{y}'})] \\
    &\leftrightarrow \neg \tforall \ul{\tilde{y}},\ul{\tilde{y}'} \unlhd \ul{y},\ul{y'} [(A_\krivine)_\bfi(\ul{x},\ul{\tilde{y}}) \wedge (B_\krivine)_\bfi(\ul{x'},\ul{\tilde{y}'})] \\
    &\equiv \neg \tforall \ul{\tilde{y}},\ul{\tilde{y}'} \unlhd \ul{y},\ul{y'} [(A \vee B)_\krivine]_\bfi(\ul{x},\ul{x'},\ul{\tilde{y}},\ul{\tilde{y}'}).
  \end{align*}

  \noindent Let us now consider the case of bounded universal quantification $\forall z \unlhd t A$. Assume $\ul{x} \unlhd \ul{x}$ and $\ul{y} \unlhd \ul{y}$. Using the induction hypothesis in the first equivalence and intuitionistic logic in the second and third\footnote{Probably the easiest way to prove the third equivalence is to prove
  \begin{equation*}
    \exists z \unlhd t \tforall \ul{\tilde{y}} \unlhd \ul{y} (A_\krivine)_\bfi(\ul{x},\ul{\tilde{y}}) \leftrightarrow \tforall \ul{\hat{y}} \unlhd \ul{y} \exists z \unlhd t \tforall \ul{\tilde{y}} \unlhd \ul{\hat{y}} (A_\krivine)_\bfi(\ul{x},\ul{\tilde{y}}).
  \end{equation*}
  To prove the right-to-left implication, we just take $\ul{\hat{y}} = \ul{y}$, which we can do because $\ul{y} \unlhd \ul{y}$. So here again we need to use the hypothesis $\ul{x} \unlhd \ul{x} \wedge \ul{y} \unlhd \ul{y}$.} equivalences, we have
  \begin{align*}
    (\forall z \unlhd t A)_\sbfi(\ul{x},\ul{y}) &\equiv \forall z \unlhd t A_\sbfi(\ul{x},\ul{y}) \\
    &\leftrightarrow \forall z \unlhd t \neg \tforall \ul{\tilde{y}} \unlhd \ul{y} (A_\krivine)_\bfi(\ul{x},\ul{\tilde{y}}) \\
    &\leftrightarrow \neg \exists z \unlhd t \tforall \ul{\tilde{y}} \unlhd \ul{y} (A_\krivine)_\bfi(\ul{x},\ul{\tilde{y}}) \\
    &\leftrightarrow \neg \tforall \ul{\hat{y}} \unlhd \ul{y} \exists z \unlhd t \tforall \ul{\tilde{y}} \unlhd \ul{\hat{y}} (A_\krivine)_\bfi(\ul{x},\ul{\tilde{y}}) \\
    &\equiv \neg \tforall \ul{\hat{y}} \unlhd \ul{y} [(\forall z \unlhd t A)_\krivine]_\bfi(\ul{x},\ul{\hat{y}}).
  \end{align*}

  \noindent Finally, let us consider the case of unbounded universal quantification $\forall z A$. Assume $w \unlhd w$, $\ul{x} \unlhd \ul{x}$, and $\ul{y} \unlhd \ul{y}$. Using the induction hypothesis in the first equivalence and intuitionistic logic in the second and third equivalences, we have
  \begin{align*}
    (\forall z A)_\sbfi(w,\ul{x},\ul{y}) &\equiv \forall z \unlhd w A_\sbfi(\ul{x},\ul{y}) \\
    &\leftrightarrow \forall z \unlhd w \neg \tforall \ul{\tilde{y}} \unlhd \ul{y} (A_\krivine)_\bfi(\ul{x},\ul{\tilde{y}}) \\
    &\leftrightarrow \neg \exists z \unlhd w \tforall \ul{\tilde{y}} \unlhd \ul{y} (A_\krivine)_\bfi(\ul{x},\ul{\tilde{y}}) \\
    &\leftrightarrow \neg \tforall \ul{\hat{y}} \unlhd \ul{y} \exists z \unlhd w \tforall \ul{\tilde{y}} \unlhd \ul{\hat{y}} (A_\krivine)_\bfi(\ul{x},\ul{\tilde{y}}) \\
    &\equiv \neg \tforall \ul{\hat{y}} \unlhd \ul{y} [(\forall z A)_\krivine]_\bfi(w,\ul{x},\ul{\hat{y}}).
  \end{align*}

  \noindent In case we consider $\wedge$ a primitive symbol, let us now see the case of conjunction $A \wedge B$. Assume $\ul{x} \unlhd \ul{x}$, $\ul{x'} \unlhd \ul{x'}$, $\ul{y} \unlhd \ul{y}$, and $\ul{y'} \unlhd \ul{y'}$. Using the induction hypothesis in the first equivalence and intuitionistic logic in the second and third equivalences, we have
  \begin{align*}
    (A \wedge B)_\sbfi(\ul{x},\ul{x'},\ul{y},\ul{y'}) &\equiv A_\sbfi(\ul{x},\ul{y}) \wedge B_\sbfi(\ul{x'},\ul{y'}) \\
    &\leftrightarrow \neg \tforall \ul{\tilde{y}} \unlhd \ul{y} (A_\krivine)_\bfi(\ul{x},\ul{\tilde{y}}) \wedge \neg \tforall \ul{\tilde{y}'} \unlhd \ul{y'} (B_\krivine)_\bfi(\ul{x'},\ul{\tilde{y}'}) \\
    &\leftrightarrow \neg [\tforall \ul{\tilde{y}} \unlhd \ul{y} (A_\krivine)_\bfi(\ul{x},\ul{\tilde{y}}) \vee \tforall \ul{\tilde{y}'} \unlhd \ul{y'} (B_\krivine)_\bfi(\ul{x'},\ul{\tilde{y}'})] \\
    &\leftrightarrow \neg \tforall \ul{\hat{y}},\ul{\hat{y}'} \unlhd \ul{y},\ul{y'} [\tforall \ul{\tilde{y}} \unlhd \ul{\hat{y}} (A_\krivine)_\bfi(\ul{x},\ul{\tilde{y}}) \vee {} \\
    & \phantom{{} \leftrightarrow \neg {}} \tforall \ul{\tilde{y}'} \unlhd \ul{\hat{y}'} (B_\krivine)_\bfi(\ul{x'},\ul{\tilde{y}'})] \\
    &\equiv \neg \tforall \ul{\hat{y}},\ul{\hat{y}'} \unlhd \ul{y},\ul{y'} [(A \wedge B)_\krivine]_\bfi(\ul{x},\ul{x'},\ul{\hat{y}},\ul{\hat{y}'}).
  \end{align*}

  Step 2.~Now we prove (\ref{equationFactorization1}). Assume $\ul{Y} \unlhd \ul{Y}$ and $\ul{x} \unlhd \ul{x}$. Using (\ref{equationFactorization3}) in the equivalence, we have
  \begin{align*}
    A_\sbfi(\ul{x},\ul{Y}\ul{x}) &\leftrightarrow \neg \tforall \ul{y} \unlhd \ul{Y}\ul{x} (A_\krivine)_\bfi(\ul{x},\ul{y}) \\
    &\equiv (\neg A_\krivine)_\bfi(\ul{Y},\ul{x}) \\
    &\equiv (A^\krivine)_\bfi(\ul{Y},\ul{x}).
  \end{align*}

  Step 3.~Finally, we prove (\ref{equationFactorization2}). Using $\bmac$ in the first equivalence, the monotonicity of $\sbfi$ in the second equivalence and (\ref{equationFactorization1}) in the third equivalence, we have
  \begin{align*}
    A^\sbfi &\equiv \tforall \ul{x} \texists \ul{y} A_\sbfi(\ul{x},\ul{y}) \\
    &\leftrightarrow \texists \ul{Y} \tforall \ul{x} \texists \ul{y} \unlhd \ul{Y}\ul{x} A_\sbfi(\ul{x},\ul{y}) \\
    &\leftrightarrow \texists \ul{Y} \tforall \ul{x} A_\sbfi(\ul{x},\ul{Y}\ul{x}) \\
    &\leftrightarrow \texists \ul{Y} \tforall \ul{x} (A^\krivine)_\bfi(\ul{Y},\ul{x}) \\
    &\equiv (A^\krivine)^\bfi. \qedhere
  \end{align*}
\end{proof}

\bibliography{referenc}
\bibliographystyle{plain}

\end{document}